\theoremstyle{plain}
\newtheorem{theorem}{Theorem}[section]
\newtheorem{lemma}[theorem]{Lemma}
\newtheorem{corollary}[theorem]{Corollary}
\newtheorem{proposition}[theorem]{Proposition}
\newtheorem{remark}[theorem]{Remark}
\newtheorem{definition}[theorem]{Definition}
\newtheorem{point}[theorem]{}
\newcommand{\m}{\mathfrak{m}}
\newcommand{\n}{\mathfrak{n}}
\newcommand{\injdim}{\operatorname{injdim}}
\newcommand{\Ass}{\operatorname{Ass}}
\newcommand{\height}{\operatorname{height}}
\newcommand{\maxsp}{\operatorname{MaxSpec}}
\newcommand{\Supp}{\operatorname{Supp}}
\newcommand{\variety}{\operatorname{V}}
\newcommand{\Spec}{\operatorname{Spec}}
\newcommand{\Mod}{\operatorname{Mod}}
\newcommand{\Ext}{\operatorname{Ext}}
\theoremstyle{plain}
\begin{document}
\title{\large \textbf{Bounds on Bass numbers of local cohomology modules }}
\author{\textsc{Sayed Sadiqul Islam}}
\address{Department of Mathematics, IIT Bombay, Powai, Mumbai 400076, India}
\email{ssislam1997@gmail.com, 22d0786@iitb.ac.in}
\author{\textsc{Tony J. Puthenpurakal}}
\address{Department of Mathematics, IIT Bombay, Powai, Mumbai 400076, India}
\email{tputhen@math.iitb.ac.in}
\date{\today}

	\subjclass{Primary 13D45; Secondary 13N10, 13H10}
	\keywords{Weyl algebra, local cohomology, Bass numbers}
    
	\begin{abstract}
    Let $R=K[x_1,\ldots,x_m]$ where $K$ is an uncountable algebraically closed field of characteristic $0$.
	  For a prime ideal $P$ of $R$, let $
    \mu_j(P,M)$ be the $j$-th Bass number of an $R$-module $M$ with respect to the prime $P$. For $1\leq g\leq m-1$, we construct a set $\mathcal{S}_g(t)$ such that  $\mathcal{S}_g(t)\subseteq \mathcal{S}_g(t+1)$ for all $t\geq 1$ and $\bigcup_{t\geq 1} \mathcal{S}_g(t)=\Spec_g(R)=\{P\in \Spec (R)\mid \height P=g\}$. Let $\mathcal{T}$ be a Lyubeznik functor on $\Mod(R)$. We prove that there exists some function $\phi^g_i: \mathbb{N}^2\rightarrow \mathbb{N}$ which is monotonic in both the variables such that  $\mu_i(P,\mathcal{T}(R))\leq \phi^g_i(e(\mathcal{T}(R)),t)$ for all $P\in \mathcal{S}_{g}(t)$. In particular, the result holds for composition of local cohomology functors of the form $ H^{i_1}_{I_1}(H^{i_2}_{I_2}(\dots H^{i_r}_{I_r}(-)\dots)$.
	\end{abstract}
    
	 \maketitle
    
	\section{Introduction}
	Throughout this paper, $R$ is a commutative Noetherian ring. For a locally closed subscheme of $\Spec(R)$, let $H^i_Y(M)$ be the $i$-th local cohomology module of $M$ supported in $Y$. $H^i_Y(M)$ is denoted by $H^i_I(M)$ when $Y$ is closed in $\Spec(R)$, defined by an ideal $I$ of $R$.  
    
    For a prime ideal $P$ of $R$, the $j$-th Bass number of an $R$-module with respect to the prime $P$ is defined as $\mu_j(P,M)=\dim_{\kappa(P)}\Ext^j_{R_P}(\kappa(P),M_P)$. Let the injective dimension of $M$ be denoted by $\injdim_R M$. The local cohomology modules have been studied by a number of authors. Although these modules have been studied extensively, their structure remains poorly understood. Most of the time, we do not know when they vanish. Furthermore, when they do not vanish, they are rarely finitely generated even if $M$ is. For example, if $(R,\mathfrak{m})$ is a local ring of dimension $d\geq 1$, then $H^d_\mathfrak{m}(R)$ is never finitely generated. Therefore, an important problem is to identify some finiteness properties of local cohomology modules for better understanding of their structure. In \cite{HS-93}, Huneke and Sharp, for regular rings containing a field of characteristic $p>0$ proved the following results:
    \begin{enumerate}
        \item $H^j_\m(H^i_I(R))$ is injective, where $\m$ is any maximal ideal of $R$.
        \item $\injdim H^i_I(R)\leq \dim \Supp H^i_I(R)$.
        \item  The set of associated primes of $H^i_I(R)$ is finite.
        \item  All the Bass numbers of $H^i_I(R)$ are finite.
    \end{enumerate}
	Following these results, Lyubeznik in his seminal paper \cite{Lyu-93} proved analogous finiteness properties of local cohomology modules in characteristic $0$. He used the theory of $\mathcal{D}$-modules to prove the results for considerably large class of functors, known as Lyubeznik functor (See, \ref{lyubeznik functor} for the definition). The local cohomology and the composition of local cohomologies are examples of Lybeznik functors. The natural question is whether the results of Huneke and Sharp can be extended to the Lyubeznik functor. In \cite{Lyu-97}, Lyubeznik developed the theory of $F$-modules over regular rings of characteristic $p>0$ and  proved similar results. He also used $\mathcal{D}$-modules and some new ideas to prove these results for unramified regular local ring (See, \cite{Lyu-2000}). An ingenious blend of $\mathcal{D}$-module and $F$-module theory by Bhatt et al. in \cite{BBLSZ-14} was used to show that, for smooth $\mathbb{Z}$-algebras, local cohomology modules have only finitely many associated primes.

In this paper, we are mostly concerned with  upper bounds of Bass numbers for local cohomology modules (more generally of $\mathcal{T}(R)$, where $\mathcal{T}$ is a Lyubeznik functor).  Let $e(M)$ denote the multiplicity of an $A_m(K)$-module $M$ where $A_m(K)$ is the Weyl algebra corresponding to the polynomial ring $K[x_1,\ldots,x_m]$. We now state the results proved in this paper. The first result we prove is the following.\medskip

\noindent
\textbf{Theorem A.} (See, Theorem \ref{Bounded bass no for m})  \phantomsection\label{main theorem for m}
 \textit{Let $R=K[x_1,\ldots,x_m]$ where $K$ is a field of characteristic $0$. Let $T(R)= H^{i_1}_{I_1}(H^{i_2}_{I_2}(\dots H^{i_r}_{I_r}(R)\dots)$. Suppose $\injdim T(R)=c$. Then for all $i=0,1,\ldots,c$; $$\mu_i(\m ,T(R))\leq \binom{m}{i}(1+i)^n e(T(R)) $$ for all  maximal ideal $\m$ of $R$.}
\medskip

This improves an earlier result from \cite{Put-14} (See, Proposition 1.3), but for the composition of local cohomology functors. In the theorem mentioned above, we are able to remove the hypothesis that $K$ is algebraically closed which was initially assumed in  \cite[Proposition 1.3]{Put-14}.

One naturally asks, what happens when instead of maximal ideals, we consider prime ideals. The paper provides some partial answers to this question. In \cite{BBY-15}, Banerjee et al. proved the characteristic $p$ version of the above theorem, where prime ideals are considered instead of maximal ideals. They proved that if $M$ is a $F$-finite, $F$-module over a Noetherian regular ring $S$ of finite Krull dimension, then there exists some $B> 0$ such that $\mu_i(P,M)\leq B$ for all $P\in \Spec(S)$, and $i\in \mathbb{N}$. 

In the next few results, we study the Bass numbers of $T(R)$ with respect to prime ideals.\medskip

\noindent
\textbf{Theorem B.} (See, Theorem \ref{proof of graded case m-1})  \phantomsection\label{main theorem for P but graded}
	\textit{Let $K$ be a field of characteristic 0 and $R$ be a standard graded polynomial ring $K[x_1,\ldots,x_m]$, i.e., $\deg(x_i)=1$ for all $i$.  Let $I_1,I_2,\ldots,I_r$  be homogeneous ideals of $R$ and $T(R)= H^{i_1}_{I_1}(H^{i_2}_{I_2}(\dots H^{i_r}_{I_r}(R)\dots)$. Then, there exists some $B> 0$ $\left(\text{depending on}\  T(R)\right)$ such that $$\mu_i(P,T(R))\leq B $$ for all  graded prime ideal $P$ of $ R $ of height $m-1$.}
	\medskip
    
    When we are not in a graded setup, the situation is different. We state few results that answer this question. For the remaining of the results we assume $K$ to be an uncountable algebraically closed field of characteristic $0.$\medskip

    \noindent
    \textbf{Lemma C.} (See, Lemma \ref{Result of mu_0})  \phantomsection\label{main theorem non graded}
       \textit{Let $K$ be an uncountable algebraically closed field of characteristic $0$ and $R=K[x_1,\ldots,x_m]$. Let $\mathcal{T}$ be a Lyubeznik functor on $\Mod(R)$. Then, $$\mu_0(P,\mathcal{T}(R))\leq 2^mm\ e(\mathcal{T}(R))$$ for all prime ideal $P$ of $R$ of height $m-1$.}
    \medskip

    Eisenbud and Evans proved that every ideal in $K[x_1,\ldots,x_m]$ can be generated by $m$ elements up to radical; see
\cite[Theorem 1]{EG-73}. Therefore, if $P$ is a prime ideal in $K[x_1,\ldots,x_m]$, then there exists some ideal $J$ such that  $ P=\sqrt{J}$ where $J=(f_1,\ldots,f_m)$. Hence, we naturally define the following set.
\begin{definition} For a fixed number $t$,
 $$H_{g}(t):=\{P\in \Spec(R)\ |\  \height(P)=g,\  P=\sqrt{J} \ \text{where\
 $J=(f_1,\ldots,f_m)$,\ $\deg(f_j)\leq t$ \}}.$$
 \end{definition}
It is easy to see that $H_{g}(t)\subseteq H_{g}(t+1)$ for all $t\geq 1$ and $\bigcup_{t\geq 1}H_{g}(t)=\Spec_{g}(R):=\{P\in \Spec(R)\ |\  \height P=g\}$. Since $\Spec_{g}(R)$ is uncountable, there exists some $t_0$ such that $H_{g}(t_0)$ is uncountable. Hence, $H_g(t)$ is uncountable for all $t\geq t_0$.
 
  As a consequence of \hyperref[main theorem non graded]{Lemma C}, we prove the following result which deals with the higher Bass numbers of $\mathcal{T}(R)$ for primes in $ H_{m-1}(t)$.\medskip

  \noindent
    \textbf{Theorem D.} (See, Theorem \ref{corollary for m-1})  \phantomsection\label{main theorem corollary}
  \textit{ Let $R=K[x_1,\ldots,x_m]$ where $K$ is an uncountable algebraically closed field of characteristic $0$.  Let $\mathcal{T}$ be a Lyubeznik functor on $\Mod(R)$. Then, for all $P\in H_{m-1}(t)$, $$\mu_i(P,\mathcal{T}(R))\leq \phi_i^{m-1}(e(\mathcal{T}(R)),t)$$ 
   where $$\phi_i^{m-1}(u,v)=2^mm\binom{m}{i}u(1+iv)^m.$$}\medskip

Next, we define $\mathcal{S}_g(t)$ for $1\leq g\leq m-1$ inductively.
\begin{definition}
Define $\mathcal{S}_{m-1}(t):=H_{m-1}(t)$. If $S_{g+1}(t)$ is defined for some $1\leq g\leq m-2$, then we define $\mathcal{S}_g(t)$ in the following way:
 $$\mathcal{S}_g(t):=\{P\in H_g(t)\mid\  C^P_g(t)\  \text{is uncountable}\}.$$ where 
 $$C^P_g(t):=\{Q\in S_{g+1}(t) \mid Q\supsetneq P\}$$
 \end{definition}
 The following properties of the sets $\mathcal{S}_g(t)$ for $1\leq g\leq m-2$ justify the way they are defined.
 \medskip

  \noindent
    \textbf{Proposition E.} (See, Proposition \ref{proposition of s_g proof})\phantomsection\label{Proposition about S_g}
  \textit{Let $1\leq g\leq m-2$. Then,
  \begin{enumerate}[\rm (1)]
      \item Let $P\in \mathcal{S}_g(t) $. Then, $C^P_g(t)\subseteq C^P_g(t+1)$ for all $t\geq 1$ and $\bigcup_{t\geq 1}C^P_g(t)=\Spec_{g+1}(R)\cap \variety(P)$;
      \item $\mathcal{S}_g(t)\subseteq \mathcal{S}_g(t+1)$ for all $t\geq 1$ and $\bigcup_{t\geq 1} \mathcal{S}_g(t)=\Spec_g(R)$.
  \end{enumerate}}\medskip
 
The final and main result result of this paper is the following.\medskip

\noindent
\textbf{Theorem F.} (See, Theorem \ref{proof of main theorem}) \label{bounded by some function}
 \textit{  Let $1\leq g\leq m-1$ and let $R=K[x_1,\ldots,x_m]$ where $K$ is an uncountable algebraically closed field of characteristic $0$. Let $M=\mathcal{T}(R)$ for some Lyubeznik functor $\mathcal{T}$. Then, there exists some function $\phi^g_i: \mathbb{N}^2\rightarrow \mathbb{N}$ which is monotonic in both the variables such that  $\mu_i(P,M)\leq \phi^g_i(e(M),t)$ for all $P\in \mathcal{S}_{g}(t)$.}\medskip

We provide a brief overview of the organization of the article. Section 2 introduces the necessary preliminaries - including definitions, assumptions, and supporting results needed to prove our theorems. In Section 3, we present the proof of \hyperref[main theorem for m]{Theorem A}, followed by the proof of \hyperref[main theorem for P but graded]{Theorem B} in Section 4. In Section 5, we prove  \hyperref[main theorem non graded]{Lemma C} and \hyperref[main theorem corollary]{Theorem D}. Finally, Section 6 completes the paper with the proof of \hyperref[bounded by some function]{Theorem F}.
	\section{Preliminaries}

	Let $R$ be a Noetherian commutative ring and $G$ be a finite subgroup of $\operatorname {Aut}(R)$; the group of automorphism of $R$. Further assume that $|G|$ is invertible in $R$. Let $R^G$ be the ring of invariants of $G$. By $R*G$,
    we denote the skew group ring. For basic properties of skew group ring we refer \cite{Put-16}. Let us recall few results from \cite{Put-16} that we need in this paper.
	\begin{corollary}\cite[Corollary 3.3]{Put-16}
		Let $I,I_1,\ldots,I_r$ be ideals in $R^G$. Then:
		\begin{enumerate}[\rm (1)]
		\item $H^i_{IR}(R)$ is an $R*G$-module  and $H^i_{IR}(R)^G\cong H^i_I(R^G)$ for each $i\geq 0$.
		\item For all $i_j\geq 0$, where $j=1,\ldots,r$, $H^{i_1}_{I_1}(H^{i_2}_{I_2}(\dots H^{i_r}_{I_r}(R)\dots)$ is an $R*G$-module and $$H^{i_1}_{I_1R}(H^{i_2}_{I_2R}(\dots H^{i_r}_{I_rR}(R)\dots)^G\cong H^{i_1}_{I_1}(H^{i_2}_{I_2}(\dots H^{i_r}_{I_r}(R^G)\dots).$$
		\end{enumerate}
	\end{corollary}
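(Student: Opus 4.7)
The plan is to represent local cohomology via the Čech complex built from generators lying in the invariant subring, and then exploit the exactness of the $G$-invariants functor $(-)^G$, which holds because $|G|$ is invertible in $R$ (Reynolds operator $\rho=\frac{1}{|G|}\sum_{g\in G}g$ gives a natural splitting $M = M^G \oplus \ker\rho$ for every $R*G$-module $M$).

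For part (1), I would first choose a generating set $f_1,\ldots,f_n\in R^G$ for $I$; this set also generates $IR$, so $H^i_{IR}(R)$ is the cohomology of the Čech complex $\check{C}^{\bullet}(f_1,\ldots,f_n;R)$. Since each $f_i$ is $G$-invariant, each localization $R_{f_i}$ carries the $G$-action $g\cdot(r/f_i^k)=g(r)/f_i^k$, and the Čech differentials are $G$-equivariant, so $\check{C}^{\bullet}(\underline{f};R)$ is a complex of $R*G$-modules. Passing to cohomology endows $H^i_{IR}(R)$ with an $R*G$-structure. The key compatibility to verify is that $(-)^G$ commutes with localization at a $G$-invariant element, i.e.\ $(M_{f_i})^G=(M^G)_{f_i}$ for every $R*G$-module $M$; this follows directly from the Reynolds splitting together with the fact that $\rho$ commutes with multiplication by $f_i\in R^G$. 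Applying this componentwise gives $\check{C}^{\bullet}(\underline{f};R)^G=\check{C}^{\bullet}(\underline{f};R^G)$, and since $(-)^G$ is exact it commutes with taking cohomology; hence $H^i_{IR}(R)^G\cong H^i(\check{C}^{\bullet}(\underline{f};R^G))=H^i_I(R^G)$.

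For part (2), I would first note that the argument in (1) generalizes verbatim: for any $R*G$-module $N$ and any ideal $J\subseteq R^G$, the Čech complex on generators of $J$ in $R^G$ shows that $H^i_{JR}(N)$ is again an $R*G$-module and $H^i_{JR}(N)^G\cong H^i_J(N^G)$. Then I would iterate starting from the innermost functor: $H^{i_r}_{I_rR}(R)$ is an $R*G$-module with invariants $H^{i_r}_{I_r}(R^G)$ by the extended (1); feeding this module into the generalized statement with ideal $I_{r-1}$ gives that $H^{i_{r-1}}_{I_{r-1}R}\bigl(H^{i_r}_{I_rR}(R)\bigr)$ is an $R*G$-module whose $G$-invariants are $H^{i_{r-1}}_{I_{r-1}}\bigl(H^{i_r}_{I_r}(R^G)\bigr)$. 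A finite induction on $r$ yields the claimed isomorphism for the full composition.

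The only non-formal point is the commutation $(M_f)^G=(M^G)_f$ for $f\in R^G$ — that is the sole ingredient linking the Čech resolution over $R$ to the one over $R^G$, and I expect this (via the Reynolds operator) to be the main thing to write out carefully; everything else is formal manipulation using exactness of $(-)^G$ and functoriality of the Čech complex.
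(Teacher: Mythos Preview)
The paper does not prove this corollary: it is quoted verbatim from \cite{Put-16} as background and carries no argument in the present article. So there is no in-paper proof to compare against.

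That said, your proposal is correct and is the standard route (and essentially the one in \cite{Put-16}). The two ingredients you isolate are exactly the right ones: exactness of $(-)^G$ via the Reynolds operator $\rho=\tfrac{1}{|G|}\sum_{g\in G}g$, and the compatibility $(M_f)^G\cong (M^G)_f$ for $f\in R^G$, which together force $\check{C}^\bullet(\underline{f};R)^G=\check{C}^\bullet(\underline{f};R^G)$ and hence the isomorphism on cohomology. Your observation that the same argument goes through with $R$ replaced by an arbitrary $R*G$-module $N$ is what makes the induction in part~(2) work; this is also how \cite{Put-16} proceeds. One tiny point worth making explicit when you write it up: for the identification $(M_f)^G=(M^G)_f$ it is cleanest to argue via the splitting $M=M^G\oplus\ker\rho$, note that localization at $f\in R^G$ preserves both summands, and check that $(\ker\rho)_f\subseteq\ker(\rho_{M_f})$ has trivial invariants; the naive elementwise argument (``$g(m)/f^k=m/f^k$ implies $m\in M^G$'') is not literally true because of denominators.
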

	The following result is crucial.
	\begin{theorem}\label{Bass number relation}\cite[Theorem 6.1]{Put-16}
		Let $K$ be a field and  $R$ be a regular domain containing $K$. Let $G$ be a finite subgroup of the group of automorphism of $R$. Let $|G|$ be invertible in $K$. Assume that $R^G$ is Gorenstein and $I_1,I_2,\ldots, I_r$ be ideals in $R^G$. Set $T(R^G)=H^{i_1}_{I_1}(H^{i_2}_{I_2}(\dots H^{i_r}_{I_r}(R^G)\dots)$ and $T(R)= H^{i_1}_{I_1R}(H^{i_2}_{I_2R}(\dots H^{i_r}_{I_rR}(R)\dots)$. Let $P$ be a prime ideal in $R^G$. Then $\mu_j(P,T(R^G))=\mu_j(P',T(R))$ where $P'$is any prime in $R$ lying over $P$.
	\end{theorem}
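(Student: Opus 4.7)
The plan is to prove the equality by combining the identity $T(R^G) \cong T(R)^G$ (from the preceding corollary) with $G$-equivariant localization and flat base change for Ext. Setting $S := R^G \setminus P$ (a $G$-stable multiplicative set) and $R_S := R \otimes_{R^G} R^G_P$, one obtains a semi-local ring whose maximal ideals are in bijection with the primes of $R$ lying over $P$, permuted transitively by $G$. Since $|G|$ is invertible, Reynolds averaging commutes with localization, so $(R_S)^G = R^G_P$ and $(T(R)_S)^G = T(R^G)_P$.

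The key technical input is that the local extension $R^G_P \to R_{P'}$ is flat, in fact free of rank $|G|$. This starts from the Reynolds splitting $R_S = R^G_P \oplus V$ as $R^G_P$-modules, and is upgraded to freeness by a depth / miracle-flatness argument using Cohen-Macaulayness of $R_S$ (as a localization of the regular ring $R$) against the Gorenstein hypothesis on $R^G$. Flatness, together with the fact that Lyubeznik functors commute with flat base change, then yields $T(R)_{P'} \cong T(R^G)_P \otimes_{R^G_P} R_{P'}$.

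The Ext comparison now follows from the flat base-change isomorphism
\[
\Ext^j_{R^G_P}\!\bigl(\kappa(P),\ T(R^G)_P\bigr) \otimes_{R^G_P} R_{P'} \;\cong\; \Ext^j_{R_{P'}}\!\bigl(\kappa(P) \otimes_{R^G_P} R_{P'},\ T(R)_{P'}\bigr),
\]
combined with the identification $\kappa(P) \otimes_{R^G_P} R_{P'} \cong \kappa(P')$, which holds because the invertibility of $|G|$ precludes ramification of the finite extension $R^G_P \to R_{P'}$ in characteristic zero. Taking $\kappa(P')$-dimensions on both sides gives exactly $\mu_j(P, T(R^G)) = \mu_j(P', T(R))$.

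The main obstacle is establishing flatness and unramifiedness of $R^G_P \to R_{P'}$, since neither is automatic from the stated hypotheses at primes where $G$ has nontrivial stabilizer acting by pseudo-reflections. If these conditions fail at certain primes, the argument must be refined; a natural fallback is to work in the category of $R*G$-modules, where exactness of $(-)^G$ (guaranteed by $|G|$ invertible) together with adjunctions for the skew-group ring allows one to transfer $\Ext$ computations between $R^G_P$ and $R_{P'}$ directly, bypassing the need for flatness of the underlying ring extension and instead translating the Bass number equality into a statement about $G$-equivariant cohomology of induced modules.
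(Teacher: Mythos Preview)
The paper does not prove this theorem; it is quoted verbatim from \cite[Theorem 6.1]{Put-16} as a preliminary result, so there is no in-paper proof to compare against. I can therefore only evaluate your proposal on its own merits.

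Your primary line of argument has two genuine gaps, both of which you partially acknowledge. First, the claim that $R^G_P \to R_{P'}$ is flat (let alone free of rank $|G|$) does not follow from the stated hypotheses. Miracle flatness requires the base ring to be \emph{regular}, not merely Gorenstein; with $R^G$ only Gorenstein the argument breaks down, and indeed $R$ need not be flat over $R^G$ in general (the Chevalley--Shephard--Todd theorem says flatness over a polynomial invariant ring is equivalent to $G$ being generated by pseudo-reflections). Second, the assertion that $\kappa(P)\otimes_{R^G_P} R_{P'}\cong \kappa(P')$ is false in general: invertibility of $|G|$ rules out \emph{wild} ramification, not ramification altogether. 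For instance, with $G=\mathbb{Z}/2$ acting on $K[x]$ by $x\mapsto -x$ and $P=(x^2)\subset K[x^2]$, one has $PR_{P'}=x^2R_{P'}\subsetneq xR_{P'}=P'R_{P'}$, so $\kappa(P)\otimes R_{P'}$ has length $2$ over $\kappa(P')$.

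Your fallback---working directly in $R*G$-modules and exploiting exactness of $(-)^G$---is in fact the route taken in \cite{Put-16}, and is the correct idea. But as written it is only a gesture: you would need to explain how an $R*G$-injective resolution of $T(R)$ restricts to an $R$-injective resolution, how $(-)^G$ carries it to an $R^G$-injective resolution of $T(R^G)$, and how the copies of $E_R(R/P')$ and $E_{R^G}(R^G/P)$ match up under taking invariants (this last step uses that $G$ permutes the primes over $P$ transitively and that the decomposition group acts on $E_R(R/P')$). None of this is carried out, so the proposal as it stands is a plan with a correctly identified obstruction and a correctly identified alternative, but not a proof.
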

	Throughout the article $K$ is assumed to be  a field of characteristic $0$ unless explicitly stated otherwise. Let $R=K[x_1,\ldots,x_m]$ be the ring of polynomials in $m$ variables over $K$ and $A_m(K)$ be the corresponding  Weyl algebra. We mainly follow \cite[Chapter 1]{Bjo-79} for the theory of finitely generated left $A_m(K)$-modules. For finite generated left $A_m(K)$-modules $M$, let $l_{A_m(K)}(M)$ and $e_{A_m(K)}(M)$ denote the length and multiplicity respectively. When the ring $A_m(K)$ is clear from the context, we omit the subscript $A_m(K)$ and simply write $l(M)$ and $e(M)$. Let $\mathcal{B}_m$ be the class of holonomic $A_m(K)$-modules. In Bj\"{o}rk’s book \cite{Bjo-79}, holonomic modules are referred to as modules in the Bernstein class. 
	\begin{point}\normalfont
	     \textbf{Some properties of elements of $\mathcal{B}_m$:}
	Let $M\in \mathcal{B}_m$ and $R=K[x_1,\ldots,x_m]$. Then,
	\begin{enumerate}[\rm(1)]
		\item $M$ has finite length as left $A_m(K)$-modules. In fact, every strictly increasing sequence of $A_m(K)$-modules in $M$ contains at most $e(M)$ elements; see \cite[Chapter 1, Proposition 5.3]{Bjo-79}. 
        \item $\mathcal{B}_m$ is stable under extension. That is, if $0\rightarrow M_1\rightarrow M_2\rightarrow M_3\rightarrow 0$ is an exact sequence of left $A_m(K)$-modules, then $M_2\in \mathcal{B}_m$ if and only if $M_1$ and $M_3$ both $\in \mathcal{B}_m$; see \cite[Chapter 1, Proposition 5.2]{Bjo-79}. 
		\item Let $f\in R$. Then $R_f$ is a left $A_m(K)$-module and $R_f\in \mathcal{B}_m$; see \cite[Chapter 1, Theorem 5.5]{Bjo-79}. 
		\item The natural localization map $R\rightarrow R_f$ is a morphism of left $A_m(K)$-modules. Let $I\subseteq R$ be an ideal of $R$ generated by $\underline f= f_1,f_2,\ldots,f_r\in R$. Then the \v Cech complex of $R $ with respect to  $\underline f$ is defined by 
		$$
		\Check{C}^{\bullet}(\underline f,R):\  0\rightarrow R\rightarrow \bigoplus_iR_{f_i}\rightarrow \bigoplus_{i,j}R_{f_if_j}\rightarrow\ldots \rightarrow R_{f_1\ldots f_r}\rightarrow 0
		$$
		where the maps on every summand are localization map up to a sign. The local cohomology module of $R$ with support on $I$  is defined by 
		$$H_I^i(R)=H^i( \Check{C}^{\bullet}(\underline f,R)).$$
		Since $\mathcal{B}_m$ is stable under extension, using point $2$ and $3$, we see that $H^i_I(R)\in \mathcal{B}_m$.
	\end{enumerate}
    \end{point}
    Let us recall the following multiplicity bound on the multiplicity of localization of a holonomic $A_m(K)$-modules.
    \begin{theorem}\label{Multiplicity bound}
        Let $M$ be a holonomic $A_m(K)$-module. If $f\in R=k[x_1,\ldots,x_m]$, then
        $$e(M_f)\leq e(M)(1+\deg f)^m.$$
    \end{theorem}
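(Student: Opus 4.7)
The plan is to construct an explicit $B$-stable exhaustive filtration on $M_f$ whose $K$-dimensions grow at most like $\tfrac{e(M)(1+\deg f)^m}{m!}\,k^m$, and then recover the multiplicity bound by extracting a good subfiltration. Write $d := \deg f$ throughout, and let $\{B_k\}_{k \geq 0}$ denote the Bernstein filtration of $A_m(K)$. Fix a good filtration $\{\Gamma_k\}$ on $M$; by holonomicity, its Hilbert polynomial satisfies $\dim_K \Gamma_k = \tfrac{e(M)}{m!}k^m + O(k^{m-1})$.

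The candidate filtration on $M_f$ is
$$\Sigma_k := \{\, u/f^k \in M_f \mid u \in \Gamma_{k(1+d)} \,\}.$$
I would verify three properties. First, $\Sigma_k \subseteq \Sigma_{k+1}$ by rewriting $u/f^k = (fu)/f^{k+1}$ and noting $fu \in \Gamma_{k(1+d)+d} \subseteq \Gamma_{(k+1)(1+d)}$. Second, $B_1\cdot\Sigma_k \subseteq \Sigma_{k+1}$: for $x_i \in B_1$ one computes $x_i\cdot(u/f^k) = (fx_iu)/f^{k+1}$ with $fx_iu \in \Gamma_{(k+1)(1+d)}$; for $\partial_i$ the Leibniz rule yields
$$\partial_i\!\left(\frac{u}{f^k}\right) = \frac{f\,\partial_i u}{f^{k+1}} - \frac{k\,u\,\partial_i f}{f^{k+1}},$$
and both numerators lie in $\Gamma_{(k+1)(1+d)}$ since the slack $(1+d)$ in the index shift absorbs the factor $f$ (degree $d$) in the first term and the factor $\partial_i f$ (degree $\leq d-1$) in the second. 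Third, $\bigcup_k \Sigma_k = M_f$: any element $v/f^j$ with $v\in\Gamma_N$ can be rewritten as $(vf^{k-j})/f^k$ for $k$ large, and $vf^{k-j}\in\Gamma_{N+(k-j)d}\subseteq\Gamma_{k(1+d)}$ for sufficiently large $k$. The surjection $\Gamma_{k(1+d)} \twoheadrightarrow \Sigma_k$, $u\mapsto u/f^k$, then yields
$$\dim_K \Sigma_k \leq \dim_K \Gamma_{k(1+d)} = \frac{e(M)(1+d)^m}{m!}\,k^m + O(k^{m-1}).$$

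The genuine obstacle is that $\Sigma$ need not itself be a good filtration on $M_f$, so the leading coefficient above does not directly equal $e(M_f)/m!$. To convert the dimension bound into a multiplicity bound I would extract a good subfiltration: since $M_f$ is holonomic and hence finitely generated over $A_m(K)$, the exhaustion $\bigcup_k\Sigma_k = M_f$ forces some $\Sigma_{k_0}$ to contain a finite generating set. The filtration $\Gamma'_j := B_j\cdot\Sigma_{k_0}$ is then good on $M_f$, and the $B$-stability of $\Sigma$ gives $\Gamma'_j\subseteq \Sigma_{k_0+j}$. Comparing Hilbert polynomials,
$$\frac{e(M_f)}{m!}\,j^m + O(j^{m-1}) \;=\; \dim_K\Gamma'_j \;\leq\; \dim_K\Sigma_{k_0+j} \;\leq\; \frac{e(M)(1+d)^m}{m!}(k_0+j)^m + O((k_0+j)^{m-1}),$$
and letting $j \to \infty$ and comparing the coefficients of $j^m$ yields $e(M_f) \leq e(M)(1+d)^m = e(M)(1+\deg f)^m$, as required.
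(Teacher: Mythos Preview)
Your argument is correct and is essentially the classical proof that appears in Bj\"{o}rk's book: construct the $B$-stable exhaustive filtration $\Sigma_k=\{u/f^k : u\in\Gamma_{k(1+d)}\}$ on $M_f$, bound $\dim_K\Sigma_k$ via the surjection from $\Gamma_{k(1+d)}$, and then compare with a good filtration sitting inside $\Sigma$. The verifications of $\Sigma_k\subseteq\Sigma_{k+1}$, $B_1\Sigma_k\subseteq\Sigma_{k+1}$, exhaustion, and the final asymptotic comparison are all handled correctly.

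The paper itself does not give a proof at all: it simply records that the statement is \cite[Chapter~1, Theorem~5.19]{Bjo-79}. So there is no alternative route to compare against; you have written out precisely the argument that the paper defers to the reference. One small remark: you invoke ``since $M_f$ is holonomic'' to guarantee finite generation before extracting the good subfiltration. In Bj\"{o}rk's treatment the holonomicity of $M_f$ is in fact a \emph{consequence} of the very same filtration $\Sigma$ (the polynomial growth $\dim_K\Sigma_k=O(k^m)$ forces every finitely generated submodule to have dimension $\le m$, and the uniform multiplicity bound then forces $M_f$ itself to be finitely generated). Your presentation is not circular in the context of this paper, since holonomicity of localizations is already quoted as a known fact in the preliminaries, but it is worth knowing that the filtration you built does all the work on its own.
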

    \begin{proof}
        The proof follows from \cite[Chapter 1, Theorem 5.19]{Bjo-79}.
    \end{proof}
    Now we define Lyubeznik functors.
  \begin{point}\label{lyubeznik functor}
   \normalfont
 \textbf{Lyubeznik functors:}
As before assume that $R$ is a commutative Noetherian ring and let $X = \Spec(R)$. Let $Y$ be a locally closed subset of $X$. If $M$ is an $R$-module and  $Y$ is a locally closed subscheme of $\Spec(R)$, we denote by $H^i_Y(M)$ the $i$-th local cohomology module of $M$ with support in $Y$.  Suppose  $Y = Y_1 \setminus Y_2$ where $Y_2 \subseteq Y_1$ are two closed subsets of $X$, then we have an exact sequence of functors
 $$
 \cdots \rightarrow H^i_{Y_1}(-) \rightarrow H^i_{Y_2}(-) \rightarrow H^i_Y(-) \rightarrow H^{i+1}_{Y_1}(-) \rightarrow\cdots.
 $$
 A Lyubeznik functor $\mathcal{T}$ is any functor of the form $\mathcal{T} = \mathcal{T}_1\circ\mathcal{T}_2 \circ \cdots \circ \mathcal{T}_m$ where every functor $\mathcal{T}_j$  is either $H^i_Y(-)$ for some locally closed subset of $X$ or the kernel, image or cokernel of some arrow in the previous long exact sequence for closed subsets $Y_1,Y_2$ of $X$  such that $Y_2 \subseteq Y_1$.
\end{point}
We should remark that a significant example of a Lyubeznik functor  $\mathcal{T}$ is the repeated local cohomology functor $ H^{i_1}_{I_1}(H^{i_2}_{I_2}(\dots H^{i_r}_{I_r}(R)\dots)$.
\begin{point}\label{countable generation point}
\normalfont
    \textbf{Countable generation:} We say that an $R$-module $M$ is countably generated if there exists a countable set of elements that generates $M$ as an $R$-module. The following results are crucial for proving our results.
    \begin{enumerate}
        \item \cite[Lemma 5.2]{PS-24} Let $0 \rightarrow M_1 \rightarrow M_2 \rightarrow M_3 \rightarrow 0$ be a short exact sequence of $R$-modules. Then, $M_2$ is countably generated if and only if $M_1$ and $M_3$ are countably generated.
        \item \cite[Lemma 5.2]{PS-24} Let $\mathcal{T}$ be a Lyubeznik functor on $\Mod(R)$. Then, $\mathcal{T}(M)$ is a countably generated $R$-module for any countably generated $R$-module $M$. In particular, $H^i_I(R)$ is a countably generated $R$-module for all ideals $I$ of $R$ and for all $i\geq 0$.
        \item \cite[Lemma 2.3]{Put-16-Pac}  Let $M$ be a countably generated $R$-module. Then, $\Ass_R M$ is a countable set. 
    \end{enumerate}
\end{point}
	\section{Proof of theorem A}
	In this section, we prove \hyperref[main theorem for m]{Theorem A}. First, we prove a small lemma that we believe is already known in the literature. However, we do not have any reference, so we include the proof here.
	\begin{lemma}\label{Existence of Gaolis ext}
		Let $K$ be a field of characteristic 0 and let $\mathfrak{m}$ be a maximal ideal of $R=K[x_1,\ldots,x_m]$. Then there exists a finite Galois extension $\widetilde{K}$ (depending on $\m$) of $K$  such that if $\n$ is a maximal ideal of $\widetilde{R}$ lying over $\m$, then $\n $ is of the form $(x_1-c_1,\ldots,x_m-c_m)$ for some $c_i\in \widetilde{K}$.
		
	\end{lemma}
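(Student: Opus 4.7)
The plan is to read off $\widetilde{K}$ from the residue field at $\mathfrak{m}$ and then analyze the maximal ideals of $\widetilde{R}$ above $\mathfrak{m}$ via an explicit \'etale decomposition of the fibre. First I would invoke Zariski's form of the Nullstellensatz to conclude that $L := R/\mathfrak{m}$ is a finite field extension of $K$. Because $\charec K = 0$, the extension $L/K$ is automatically separable, so it admits a Galois closure $\widetilde{K}$ inside a fixed algebraic closure $\overline{K}$; this $\widetilde{K}$ is a finite Galois extension of $K$, and it is the candidate required by the lemma.

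Next I would study the fibre
\[
\widetilde{R}/\mathfrak{m}\widetilde{R} \;\cong\; \widetilde{K}\otimes_{K} R/\mathfrak{m} \;=\; \widetilde{K}\otimes_{K} L.
\]
Since $L/K$ is finite separable and $\widetilde{K}$ contains (a copy of) the Galois closure of $L$ over $K$, a standard application of the primitive element theorem together with factorization of the minimal polynomial over $\widetilde{K}$ shows
\[
\widetilde{K}\otimes_{K} L \;\cong\; \prod_{\sigma\colon L\hookrightarrow \widetilde{K}} \widetilde{K},
\]
the product running over the (finitely many) $K$-embeddings of $L$ into $\widetilde{K}$. Thus every maximal ideal $\n$ of $\widetilde{R}$ containing $\mathfrak{m}\widetilde{R}$ corresponds to projection onto exactly one of these factors. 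By the going-up / lying-over theorem, every maximal ideal of $\widetilde{R}$ lying over $\mathfrak{m}$ arises in this way, since $\widetilde{R}$ is integral over $R$ (as $\widetilde{K}/K$ is finite) and hence $\n\cap R$ is maximal, forcing $\mathfrak{m}\widetilde{R}\subseteq \n$.

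To conclude, let $\bar{x}_i\in L$ denote the class of $x_i$, and for the embedding $\sigma$ corresponding to $\n$ set $c_i := \sigma(\bar{x}_i)\in \widetilde{K}$. Under the composite
\[
\widetilde{R}\twoheadrightarrow \widetilde{R}/\mathfrak{m}\widetilde{R}\cong \prod_{\sigma}\widetilde{K}\twoheadrightarrow \widetilde{K}
\]
the variable $x_i$ maps to $c_i$, and this composite is a surjection of $\widetilde{R}$ onto a field whose kernel is precisely $\n$. Therefore $\n\supseteq (x_1-c_1,\ldots,x_m-c_m)$, and since the right-hand side is already maximal in $\widetilde{R}$, equality holds.

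The proof is essentially a routine Nullstellensatz/Galois-theory package, so I do not expect a genuine obstacle; the only point deserving care is justifying the splitting $\widetilde{K}\otimes_K L\cong\prod_\sigma \widetilde{K}$, which rests on separability (given by characteristic $0$) and on $\widetilde{K}$ being large enough to contain all $K$-conjugates of a primitive element of $L$, i.e.\ on $\widetilde{K}$ containing the Galois closure of $L/K$.
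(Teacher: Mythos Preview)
Your argument is correct and cleaner than the paper's. The key difference is the construction of $\widetilde{K}$ and the analysis of the fibre. You take $\widetilde{K}$ to be the Galois closure of the residue field $L=R/\mathfrak{m}$ over $K$ (well-defined because $\charec K=0$ forces separability), and then identify the maximal ideals above $\mathfrak{m}$ conceptually via the \'etale splitting $\widetilde{K}\otimes_K L\cong\prod_\sigma\widetilde{K}$ indexed by $K$-embeddings $\sigma\colon L\hookrightarrow\widetilde{K}$; the generators $c_i=\sigma(\bar{x}_i)$ fall out immediately. The paper instead invokes a triangular presentation $\mathfrak{m}=(f_1(x_1),f_2(x_1,x_2),\ldots,f_m(x_1,\ldots,x_m))$ from \cite[Theorem~5.1]{Mat-86}, builds $\widetilde{K}$ by adjoining the successive roots of the $f_i$, and then argues by passing to an auxiliary extension $\widetilde{\widetilde{K}}$, tracking zero sets, and applying the division algorithm to conclude that any $\mathfrak{n}$ above $\mathfrak{m}$ has the required linear generators.

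Your route buys brevity and transparency: once one knows the standard fact that $\widetilde{K}\otimes_K L$ splits as a product of copies of $\widetilde{K}$ whenever $\widetilde{K}$ contains a normal closure of the separable extension $L/K$, the lemma is immediate. The paper's route is more explicit and self-contained for a reader uncomfortable with tensor products of fields, and it exhibits $\widetilde{K}$ concretely in terms of generators of $\mathfrak{m}$. One minor remark: your appeal to lying-over is superfluous, since ``$\mathfrak{n}$ lies over $\mathfrak{m}$'' already means $\mathfrak{n}\cap R=\mathfrak{m}$, whence $\mathfrak{m}\widetilde{R}\subseteq\mathfrak{n}$ directly; the integrality of $\widetilde{R}$ over $R$ is not needed at that step.
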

\begin{proof}
	Let $\m$ be a maximal ideal of $K[x_1,\ldots,x_m]$. Then, by \cite[Theorem 5.1]{Mat-86}, we can write $\m=(f_1(x_1),f_2(x_1,x_2),\ldots,f_m(x_1,\ldots,x_m))$ where $f_i(x_1,\ldots,x_i)$ is monic in $x_i$.

    Let $S_1$ be the set of zeroes of the polynomial $f_1(x_1)$. For $a\in S_1$, let $S_2^a$ be the zero set of $f_2(a,x_2)$. Inductively for $3\leq i\leq m$ and for $a_1\in S_1$, $a_j\in S_j^{a_1,\ldots,a_{j-1}}$, we define $S_i^{a_1,\ldots,a_{i-1}}$ to be the set of zeroes of the polynomial $f_i(a_1,\ldots,a_{i-1},x_m)$. A careful look at the proof of \cite[Theorem 5.1]{Mat-86} shows that the polynomials $f_i(x_1,\ldots,x_i)$ are chosen such a way that these $f_i(a_1,\ldots,a_{i-1},x_i)$ are irreducible.
	
	  We claim that our required field is $\widetilde{K}$
      which is obtained by adjoining the elements of the set $S_1\cup
(\bigcup_{i=2}^{m} S_i^{a_1,\dots,a_{i-1}})
$ with $K$.
	
	Now $\m\widetilde{K}[x_1,\ldots,x_m]=(f_1,\ldots,f_m)\widetilde{K}[x_1,\ldots,x_m]$. Let $\n$ be a maximal in $\widetilde{K}[x_1,\ldots,x_m]$ lying over $\m$. Let $\n= (g_1(x_1),g_2(x_1,x_2),\ldots,g_m(x_1,\ldots,x_m))$.
	
	We note that $\n\supseteq \m\widetilde{K}[x_1,\ldots,x_m]$. Similarly, using the roots  of $g_i$ we can form a field, say $\widetilde{\widetilde{K}}$ which is a finite extension of $\widetilde{K}$. Let $\n'$ be a maximal ideal of $\widetilde{\widetilde{K}}[x_1,\ldots,x_m]$ lying over $\n$. Now $\n'\supseteq \n\widetilde{\widetilde{K}}[x_1,\ldots,x_m]\supseteq \m\widetilde{\widetilde{K}}[x_1,\ldots,x_m]$. Therefore, $Z(\n')\subseteq Z(\n\widetilde{\widetilde{K}}[x_1,\ldots,x_m])\subseteq Z(\m\widetilde{\widetilde{K}}[x_1,\ldots,x_m])$. Let $(\alpha_1,\ldots,\alpha_m)\in Z(\n\widetilde{\widetilde{K}}[x_1,\ldots,x_m])\subseteq Z(\m\widetilde{\widetilde{K}}[x_1,\ldots,x_m])$. By construction, $\alpha_1\in S_1$ and $\alpha_j\in S_j^{\alpha_1,\ldots,\alpha_{j-1}}$ for $j\geq 2$. We note that $\alpha_i\in \widetilde{K}$ for all $1\leq i\leq m$. Since for all $1\leq i\leq m$, $g_i(\alpha_1,\ldots,\alpha_{i-1},x_i)$ is irreducible, we can write $g_i(\alpha_1,\ldots,\alpha_{i-1},x_i)=r_i(x_i-\alpha_i)$. Also when $i\geq 2$, division algorithm implies $g_i(x_1,\ldots,x_i)=(x_1-\alpha_1)\cdots(x_{i-1}-\alpha_{i-1})Q_i(x_1\ldots,x_i)+g_i(\alpha_1,\ldots,\alpha_{i-1},x_i)=(x_1-\alpha_1)\cdots(x_{i-1}-\alpha_{i-1})Q_i(x_1\ldots,x_i)+r_i(x_i-\alpha_i)$. Therefore, $\n=(x_1-\alpha_1,\ldots,x_m-\alpha_m)$. This proves our result.
\end{proof}
	The following remark is important.
	\begin{remark}\normalfont\label{Mult rem}
		Let $K_1\subseteq K_2$ be two fields. Let $M$ be $A_m(K_1)$-module. Then $M\otimes_{K_1}K_2$ is $A_m(K_2)$-module and $\dim(M)=\dim(M\otimes_{K_1}K_2)$. Also if $U_1\subsetneq U_2$ are $A_m(K_1)$-submodules of $M$, then $U_1\otimes_{K_1}K_2\subsetneq U_2\otimes_{K_1}K_2$ are $A_m(K_2)$-submodules of $M\otimes_{K_1}K_2$. So $l_{A_m(K_1)}(M)\leq l_{A_m(K_2)}(M\otimes_{K_1}K_2)$. Also, if $\Omega$ is a good filtration on $M$, then $\Omega\otimes K_2$ is a good filtration on $M\otimes K_2$ and hence  $e_{A_m(K_1)}(M)= e_{A_m(K_2)}(M\otimes_{K_1}K_2)$.
		\end{remark}
        We now prove  \hyperref[main theorem for m]{Theorem A}, which we restate for the reader’s convenience.
	\begin{theorem}\label{Bounded bass no for m}
		Let $R=K[x_1,\ldots,x_m]$ where $K$ is a field of characteristic $0$. Assume that $T(R)= H^{i_1}_{I_1}(H^{i_2}_{I_2}(\dots H^{i_r}_{I_r}(R)\dots)$. Suppose $\injdim T(R)=c$. Then for all $i=0,1,\ldots,c$; $$\mu_i(\m ,T(R))\leq \binom{m}{i}(1+i)^n e_{A_m(K)}T(R) $$ for all  maximal ideal $\m$ of R.
	\end{theorem}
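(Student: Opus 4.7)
The plan is to reduce Theorem \ref{Bounded bass no for m} to the algebraically closed case, which is essentially \cite[Proposition 1.3]{Put-14}, via a Galois descent; this Galois descent is the conceptual improvement over the cited result, which was stated only for algebraically closed $K$.

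Given a maximal ideal $\m$ of $R$, Lemma \ref{Existence of Gaolis ext} supplies a finite Galois extension $\widetilde{K}/K$ and a maximal ideal $\n$ of $\widetilde{R} = \widetilde{K}[x_1,\ldots,x_m]$ lying over $\m$ of the form $(x_1-c_1,\ldots,x_m-c_m)$. With $G = \operatorname{Gal}(\widetilde{K}/K)$, one has $R = \widetilde{R}^G$, and $|G|$ is invertible in $K$ since $\operatorname{char}(K)=0$; moreover $R$ is Gorenstein. Theorem \ref{Bass number relation} therefore yields $\mu_i(\m, T(R)) = \mu_i(\n, T(\widetilde{R}))$, where $T(\widetilde{R}) = H^{i_1}_{I_1\widetilde{R}}(\cdots H^{i_r}_{I_r\widetilde{R}}(\widetilde{R})\cdots)$. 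Iterating flat base change for local cohomology along $R \to \widetilde{R}$ gives $T(\widetilde{R}) \cong T(R) \otimes_K \widetilde{K}$, and hence by Remark \ref{Mult rem} we have $e_{A_m(\widetilde{K})}(T(\widetilde{R})) = e_{A_m(K)}(T(R))$. Finally, the translation $x_i \mapsto x_i - c_i$ is an automorphism of both $\widetilde{R}$ and $A_m(\widetilde{K})$ that preserves multiplicity of any holonomic module, so after applying it we may assume $\n = (x_1,\ldots,x_m)$.

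With $\widetilde{K}$ a field of characteristic $0$ and the maximal ideal under consideration being the origin, the argument of \cite[Proposition 1.3]{Put-14} applies to bound $\mu_i(\n, T(\widetilde{R}))$. The key inputs of that proof are the socle bound for holonomic $A_m(\widetilde{K})$-modules (any $K$-linearly independent family in $\{v \in N : \n v = 0\}$ generates pairwise-trivially-intersecting copies of the simple holonomic module $\widetilde{K}[\partial_1,\ldots,\partial_m]$, giving $\dim \Hom(\widetilde{R}/\n, N) \leq e(N)$) together with the \v{C}ech complex on $x_1,\ldots,x_m$ applied to $T(\widetilde{R})$. Each \v{C}ech summand $T(\widetilde{R})_{\prod_{j \in I} x_j}$ with $|I|=i$ has $\deg \prod_{j \in I} x_j = i$, so by Theorem \ref{Multiplicity bound} its multiplicity is at most $(1+i)^m e(T(\widetilde{R}))$; summing over the $\binom{m}{i}$ subsets $I$ and combining with the socle bound yields $\mu_i(\n, T(\widetilde{R})) \leq \binom{m}{i}(1+i)^m e(T(\widetilde{R}))$. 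Chaining back through the Galois descent gives the claim for $\mu_i(\m, T(R))$.

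The main obstacle is verifying that the proof of \cite[Proposition 1.3]{Put-14}, originally phrased with $K$ algebraically closed, uses only the local condition that the maximal ideal has the form $(x_1-c_1,\ldots,x_m-c_m)$—the condition guaranteed for our particular $\n$ by Lemma \ref{Existence of Gaolis ext}. A careful rereading should confirm that algebraic closedness in \cite{Put-14} is invoked only to ensure this form for every maximal ideal (via the Nullstellensatz), whereas the Galois descent arranges the same form for the single relevant $\n$ at the cost of enlarging the field; all subsequent socle, \v{C}ech, and multiplicity estimates are insensitive to the ground field, and the multiplicity is preserved by Remark \ref{Mult rem}.
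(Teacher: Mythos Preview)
Your proposal is correct and follows essentially the same approach as the paper: reduce to a maximal ideal of the form $(x_1-c_1,\ldots,x_m-c_m)$ via the Galois extension of Lemma \ref{Existence of Gaolis ext}, apply Theorem \ref{Bass number relation} to transfer the Bass number, then invoke the proof of \cite[Proposition 1.3]{Put-14} together with the multiplicity preservation of Remark \ref{Mult rem}. Your write-up is in fact more explicit than the paper's about the content of \cite[Proposition 1.3]{Put-14} (the socle bound and \v{C}ech estimate via Theorem \ref{Multiplicity bound}) and about why algebraic closedness is not needed once $\n$ has the required form; note also that the exponent should be $m$ rather than the paper's $n$, which appears to be a typo.
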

	\begin{proof}
		Let $\widetilde{K}$	be a finite Galois extension of $K$ as in \ref{Existence of Gaolis ext}. Assume that the Galois group is $G$. Then $G$ acts  on $\widetilde{R}=\widetilde{K}[x_1,\ldots,x_m]$ and $\widetilde{R}^G=R$. Since $T(R)=T(\widetilde{R}^G)$ so by Theorem \ref{Bass number relation}, $\mu_i(\m ,T(R))=\mu_i(\m ,T(\widetilde{R}^G))=\mu_i(\n ,T(\widetilde{R}))$ where $\n$ is any maximal in $\widetilde{R}$ lying over $\m$. By \ref{Existence of Gaolis ext}, $\n $ is of the form $(x_1-c_1,\ldots,x_m-c_m)$ for some $c_i\in \widetilde{K}$. From the proof  of \cite[Proposition 1.3]{Put-14}, 
        \begin{align}\label{equation in maximal ideal theorem}
		    \mu_i(\n ,T(\widetilde{R}))\leq \binom{m}{i}(1+i)^n e_{A_m(\widetilde{K})}(T(\widetilde{R})).
            \end{align}
            Note that $T(\widetilde{R})=T(R)\otimes_K \widetilde{K}$. Using Remark \ref{Mult rem}, $e_{A_m(\widetilde{K})}(T(\widetilde{R}))= e_{A_m(K)}T(R)$.  Hence, from equation \ref{equation in maximal ideal theorem}, $\mu_i(\n ,T(\widetilde{R}))\leq  \binom{m}{i}(1+i)^n e_{A_m(K)}T(R)$. This proves our result.
	\end{proof}
    \section{Proof of Theorem B}
     Before proving  our result, we recall a result due to Goto and Watanabe from \cite{GW-78}.
     \begin{theorem}\cite[Theorem 1.1.2]{GW-78} \label{goto and watanabe result}
         Let $M$ be a graded $R$-module and let $P$ be a non-graded prime ideal of $R$. Then $\mu_0(P,M)=0$ and $\mu_{i+1}(P,M)=\mu_i(P^*,M)$ for every integer $i\geq 0$. Here, $P^*$ is the largest homogeneous prime ideal of $R$ contained in $P$.
     \end{theorem}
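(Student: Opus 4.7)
The plan is to exploit the discrete valuation ring structure of $D := R_P/P^*R_P$ and then use the graded structure of $M$ to force a key Ext group to be free over $D$. As a first step I would check that $P^*$ is prime (a classical fact: in a graded ring, if $fg$ lies in a prime with $f,g$ homogeneous then $f$ or $g$ does) and that $R/P^*$ is a graded domain. Inverting all nonzero homogeneous elements of $R/P^*$ yields the graded ``field of fractions'' $L[t,t^{-1}]$ with $L$ a field; since $L[t,t^{-1}]$ is a PID, the non-graded prime $P/P^*$ corresponds to an irreducible polynomial and has height one in $R/P^*$. Hence $D$ is a DVR with fraction field $\kappa(P^*)$, and I would pick a uniformizer $y\in R_P$ by lifting the corresponding irreducible polynomial.

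Next, I would apply $\Hom_{R_P}(-,M_P)$ to the short exact sequence
\begin{equation*}
0\to D\xrightarrow{\ y\ } D\to\kappa(P)\to 0
\end{equation*}
to produce the long exact sequence
\begin{equation*}
0\to\Hom(\kappa(P),M_P)\to E^0\xrightarrow{y} E^0\to\Ext^1(\kappa(P),M_P)\to E^1\xrightarrow{y} E^1\to\cdots,
\end{equation*}
where $E^i:=\Ext^i_{R_P}(D,M_P)$. If $y$ acts injectively on each $E^i$, this immediately gives $\mu_0(P,M)=0$ and $\mu_{i+1}(P,M)=\dim_{\kappa(P)}(E^i/yE^i)$. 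On the other hand, flat base change along the localization $R_P\to R_{P^*}$ identifies $E^i\otimes_D\kappa(P^*)$ with $\Ext^i_{R_{P^*}}(\kappa(P^*),M_{P^*})$, whose $\kappa(P^*)$-dimension is $\mu_i(P^*,M)$. If in addition each $E^i$ is a free $D$-module, then reducing modulo $y$ and tensoring with $\kappa(P^*)$ compute the same rank, so $\mu_{i+1}(P,M)=\mu_i(P^*,M)$.

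The main obstacle, and the only place where gradedness of $M$ enters, is proving that $E^i$ is free over $D$. The strategy is to factor the localization $R\to R_P$ through the homogeneous localization $R^h:=R_{(P^*)}$ obtained by inverting homogeneous elements not in $P^*$; then $R^h$ is still graded and $R^h\to R_P$ is a flat localization. Flat base change gives $E^i\cong F^i\otimes_{L[t,t^{-1}]}D$ with $F^i:=\Ext^i_{R^h}(R^h/P^*R^h,\,M\otimes_R R^h)$. Computing $F^i$ via a graded projective resolution of the graded cyclic module $R^h/P^*R^h$ endows $F^i$ with the structure of a graded module over $R^h/P^*R^h\cong L[t,t^{-1}]$. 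The final observation is that every graded $L[t,t^{-1}]$-module is free: since $t$ is a homogeneous unit, multiplication by $t$ gives isomorphisms between components shifted by $\deg t$, so lifting an $L$-basis of a complete set of residue-class components to a free basis works. Therefore $F^i\cong V^i\otimes_L L[t,t^{-1}]$ for some $L$-vector space $V^i$, and $E^i\cong V^i\otimes_L D$ is $D$-free as required.
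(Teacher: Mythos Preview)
The paper does not give its own proof of this statement: it is quoted verbatim as \cite[Theorem 1.1.2]{GW-78} and used as a black box in the proof of Theorem~\ref{proof of graded case m-1}. So there is nothing in the paper to compare your argument against.

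That said, your argument is correct and is essentially the classical proof (as in Goto--Watanabe, or Bruns--Herzog, \emph{Cohen--Macaulay Rings}, 1.5.9). The key steps are all in order: (i) since $P$ is non-graded, $P/P^{*}$ contains no nonzero homogeneous element of the graded domain $R/P^{*}$, so it survives in the homogeneous fraction field $L[t,t^{-1}]$ and hence $D=R_{P}/P^{*}R_{P}\cong L[t,t^{-1}]_{\mathfrak p}$ is a DVR with fraction field $\kappa(P^{*})$; (ii) the long exact sequence from $0\to D\xrightarrow{y}D\to\kappa(P)\to 0$ reduces the claim to showing each $E^{i}=\Ext^{i}_{R_{P}}(D,M_{P})$ is $D$-free; (iii) factoring through the homogeneous localization $R^{h}=R_{(P^{*})}$ and using that $R^{h}/P^{*}R^{h}\cong L[t,t^{-1}]$ acts on the graded module $F^{i}=\Ext^{i}_{R^{h}}(R^{h}/P^{*}R^{h},M\otimes_{R}R^{h})$, together with the fact that every graded $L[t,t^{-1}]$-module is free, gives the freeness. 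Your flat-base-change identifications are valid because $D$ is finitely presented over the Noetherian ring $R_{P}$ (resp.\ $R^{h}/P^{*}R^{h}$ over $R^{h}$), and the graded and ungraded $\Ext$ agree since the first argument admits a resolution by finitely generated graded free modules. One small remark: in the generality of an arbitrary $\mathbb{Z}$-graded ring the homogeneous uniformizer $t$ may have degree $d>1$, but your freeness argument adapts verbatim (multiplication by $t$ still gives isomorphisms $V_{n}\to V_{n+d}$), and in the paper's standard-graded setting one has $d=1$ anyway.
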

     Now we prove \hyperref[main theorem for P but graded]{Theorem B}.
	\begin{theorem}\label{proof of graded case m-1}
	Let $K$ be a field of characteristic 0 and $R=K[x_1,\ldots,x_m]$ be standard graded i.e., $\deg(x_i)=1$ for all $i$.  Let $T(R)= H^{i_1}_{I_1}(H^{i_2}_{I_2}(\dots H^{i_r}_{I_r}(R)\dots)$ for some homogeneous ideals $I_1,I_2,\ldots,I_r$ of $R$. Then, there exists some $B> 0$ $\left(\text{depending on}\  T(R)\right)$ such that $$\mu_i(P,T(R))\leq B $$ for all  graded prime ideal $P$ of $ R $ of height $m-1$.
	\end{theorem}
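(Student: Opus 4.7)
The plan is to reduce Theorem B to the maximal-ideal bound of Theorem A by means of the Goto--Watanabe relation $\mu_{i+1}(Q,M)=\mu_i(Q^*,M)$ of Theorem 4.1, applied to a cleverly chosen non-graded maximal ideal $Q$ lying over $P$.

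Fix a graded prime $P$ of height $m-1$. The first step is to produce a non-graded maximal ideal $Q$ of $R$ with $P\subseteq Q$ and $Q^*=P$. The quotient $R/P$ is a finitely generated graded $K$-algebra of dimension one which is a domain. By Noether normalization it is a finite integral extension of a polynomial ring $K[t]$, so $R/P$ has infinitely many maximal ideals, and among them only the image of the irrelevant ideal $\mathfrak m=(x_1,\ldots,x_m)$ is graded. Pulling back any non-graded maximal ideal of $R/P$ yields the desired $Q$.

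To verify $Q^*=P$: since $Q$ is non-graded and maximal, $Q^*\subsetneq Q$, so $\height Q^*\leq m-1$; on the other hand $P$ is graded with $P\subseteq Q$, forcing $P\subseteq Q^*$; and $Q\neq\mathfrak m$ gives $Q^*\neq\mathfrak m$. Comparing heights yields $Q^*=P$. With this $Q$ fixed, Theorem 4.1 applied to $T(R)$ gives
\begin{equation*}
\mu_i(P,T(R))=\mu_{i+1}(Q,T(R)) \qquad \text{for every } i\geq 0.
\end{equation*}
Theorem A supplies a bound on $\mu_{i+1}(Q,T(R))$ that is uniform over all maximal ideals $Q$ of $R$ and depends only on $T(R)$, namely $\binom{m}{i+1}(i+2)^n\,e(T(R))$. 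Since $\mu_i(P,T(R))=0$ for $i>\injdim T(R)=c$, it suffices to take
\begin{equation*}
B:=\max_{0\leq i\leq c}\binom{m}{i+1}(i+2)^n\,e(T(R)),
\end{equation*}
which is finite and depends only on $T(R)$.

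The main obstacle I anticipate is the construction of the non-graded maximal $Q$ with $Q^*=P$, since all of the uniformity of the bound funnels through this reduction; once the prime spectrum of the one-dimensional graded domain $R/P$ is made explicit, the remaining steps are direct invocations of Theorem 4.1 and Theorem A.
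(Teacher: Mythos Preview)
Your proof is correct and follows essentially the same approach as the paper: choose a non-graded maximal ideal $Q$ of $R$ lying over $P$, apply the Goto--Watanabe relation $\mu_i(P,T(R))=\mu_{i+1}(Q,T(R))$, and invoke Theorem~A. The paper's argument is terser, simply asserting the existence of such a $Q$ and the Bass-number identity, whereas you additionally verify $Q^*=P$ and make the bound $B$ explicit.
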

	\begin{proof}
Let $\bar{\m}$ be a maximal ideal of $R/P$ which is not graded. By Theorem \ref{goto and watanabe result}, we get $\mu_i(P,T(R))=\mu_{i+1}(\m,T(R))$. Hence, the result follows from Theorem $\ref{Bounded bass no for m}$.
	\end{proof}
     \section{Proof of Lemma C and Theorem D}
   In this section, we prove \hyperref[main theorem non graded]{Lemma C}, and as an immediate consequence, we obtain \hyperref[main theorem corollary]{Theorem D}. Let us first highlight the following important remark before proceeding to the proof.
   \begin{remark}\label{remark for 1st bass number}
   \normalfont
Let $R=K[x_1,\ldots,x_m]$ be a polynomial ring over a field of characteristic $0$ and let $P$ be a prime ideal of $R$ of height $g$ where $1\leq g\leq m-1$. Let $Q$ be a prime  ideal of $R$ containing $P$ such that $\height Q=g+1$. Note $R_Q/PR_Q$ is Cohen-Macaulay ring of dimension $1$. Therefore,
        from \cite[Theorem 1.2, Proposition 2.2]{Put-14}, we get $\mu_1(Q,H^g_P(R))=\mu_1(QR_Q,H^g_{PR_Q}(R_Q))=1$. 
    \end{remark}
  We now present the proof.
	\begin{lemma}\label{Result of mu_0}
	Let $K$ be an uncountable algebraically closed field of characteristic 0 and $R=K[x_1,\ldots,x_m]$. Let $\mathcal{T}$ be a Lyubeznik functor on $\Mod(R)$ . Then, $$\mu_0(P,\mathcal{T}(R))\leq 2^mm\ e(\mathcal{T}(R))$$ for all prime ideal $P$ of $R$ of height $m-1$.
	\end{lemma}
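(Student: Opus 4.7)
Write $M=\mathcal{T}(R)$. By Lyubeznik's theorem $M$ is a holonomic $A_m(K)$-module, and by \ref{countable generation point}(2) it is countably generated as an $R$-module, so $\Ass_R M$ is countable by \ref{countable generation point}(3). The plan is to locate a maximal ideal $\m\supsetneq P$ and an $M$-regular element $x\in\m\setminus P$, and then combine Bass's classical inequality $\mu_0(P,M)\le\mu_1(\m,M)$ with the maximal-ideal bound provided by Theorem \ref{Bounded bass no for m}.

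\textbf{Finding $\m$ and $x$.} Since $\dim R/P=1$ and $K$ is uncountable algebraically closed, the $K$-rational maximal ideals $\m=(x_1-c_1,\dots,x_m-c_m)$ containing $P$ form an uncountable set (they are the $K$-points of the one-dimensional irreducible variety $V(P)$). Only countably many of them belong to $\Ass M$, so I pick such an $\m\supsetneq P$ with $\m\notin\Ass M$. The countable family $\{P\}\cup\Ass M$ then consists of primes, none of which contains the maximal ideal $\m$ (because $\m\neq P$ and $\m\notin\Ass M$). Countable prime avoidance, which is valid over the uncountable field $K$, then yields $x\in\m$ with $x\notin P$ and $x$ outside every associated prime of $M$; in particular $x$ is $M$-regular.

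\textbf{Completing the bound.} Now $P\subsetneq\m$ has height difference one and admits the $M$-regular element $x\in\m\setminus P$, so Bass's classical inequality along a chain of primes of height difference one gives $\mu_0(P,M)\le\mu_1(\m,M)$. The proof of Theorem \ref{Bounded bass no for m} uses only that $T(R)$ is a holonomic $A_m(K)$-module (together with the Galois-extension reduction to $K$-rational maximal ideals), so the same argument applies verbatim to $M=\mathcal{T}(R)$ and yields $\mu_1(\m,M)\le\binom{m}{1}(1+1)^m\,e(M)=2^m m\,e(M)$. Combining the two inequalities produces the claim. The main obstacle is the prime-avoidance step: it is exactly where the uncountability of $K$ is essential, both to find $\m\in V(P)$ outside the countable set $\Ass M$ and to produce an $M$-regular $x\in\m\setminus P$ avoiding the countably many primes in $\Ass M$ simultaneously.
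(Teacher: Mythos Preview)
Your argument breaks at the step invoking ``Bass's classical inequality''. The classical Bass result (from \emph{On the ubiquity of Gorenstein rings}) is a non-vanishing statement: if $P\subsetneq Q$ are adjacent primes and $\mu_i(P,M)\neq 0$, then $\mu_{i+1}(Q,M)\neq 0$. There is no general quantitative inequality $\mu_0(P,M)\le\mu_1(\m,M)$, and having an $M$-regular element $x\in\m\setminus P$ does not produce one. A concrete counterexample within the very class of modules under discussion: take $R=K[x,y]$, $P=(x)$, and $M=H^1_{(x)}(R)\cong E_R(R/(x))$, which is $\mathcal{T}(R)$ for the Lyubeznik functor $\mathcal{T}=H^1_{(x)}$. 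Then $\Ass_R M=\{(x)\}$, any maximal ideal $\m=(x,y-c)$ lies outside $\Ass M$, and $y-c\in\m\setminus P$ is $M$-regular; yet $M$ is injective, so $\mu_1(\m,M)=0$ while $\mu_0(P,M)=1$. Your inequality fails already here.

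The paper's proof circumvents this by working not with $M$ itself but with an auxiliary module whose first Bass number at a well-chosen maximal ideal \emph{equals} $\mu_0(P,M)$ exactly. One first replaces $M$ by $H^0_P(M)$ and strips off the finitely many maximal associated primes to obtain $N$ with $\Ass N=\{P\}$ and $N_P\cong E(\kappa(P))^l$ where $l=\mu_0(P,M)$. Then one intersects $N$ with $H^{m-1}_P(R)^l$ inside $E(\kappa(P))^l$; the two cokernels are countably generated and supported only at maximal ideals in $V(P)$, so uncountability of $V(P)\cap\maxsp R$ yields an $\n$ where both cokernels vanish. At that $\n$ one gets $N_\n\cong H^{m-1}_P(R)^l_\n$, and the key input is Remark~\ref{remark for 1st bass number}: $\mu_1(\n,H^{m-1}_P(R))=1$, so $\mu_1(\n,N)=l$ \emph{on the nose}. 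Only then is the maximal-ideal bound applied to $N$. The step you are missing is precisely this structural comparison, which converts an equality of Bass numbers rather than attempting an inequality that does not exist.
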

	\begin{proof}
	    Let $M=\mathcal{T}(R)$ and $P$ be a prime ideal of $R$ height $m-1$. By \cite[Lemma 1.4, Theorem 3.4(b)]{Lyu-93}, $\mu_0(P,M)=\mu_0(P,H^0_P(M))$. We note that $e(H^0_P(M))\leq e(M)$ and $\Ass_R(H^0_P(M))$ is a finite set.  If $P\notin \Ass H^0_P(M)$, then there is nothing to prove since $\mu_0(P,M)=0$. Hence, assume that $$\Ass_R(H^0_P(M))=\{P,\m_1,\ldots,\m_s\}$$ where $\m_i$ are maximal ideals. Consider the exact sequence
        $$0\rightarrow \Gamma_{\m_1\cdots \m_s}(H^0_P(M))\rightarrow H^0_P(M)\rightarrow N\rightarrow 0.$$
Clearly $e(N)\leq e(H^0_P(M))$ and $N=\mathcal{F}(R)$ for some Lyubeznik functor $\mathcal{F}$. By \cite[Theorem 3.4(b)]{Lyu-93}, there exists some finite $l>0$ such that
$N_P=H^{m-1}_{PR_P}(R_P)^l=E(\kappa(P))^l$ as $R_P$-module and hence as $R$-module. Note that $l=\mu_0(P,N)=\mu_0(P,M)$. Let $C=N\cap H^{m-1}_{P}(R)^l$. Therefore, $C_P=N_P\cap H^{m-1}_{PR_P}(R_P)^l= N_P=H^{m-1}_{PR_P}(R_P)^l$. Consider the exact sequence \begin{align}\label{equation w1}
     0\rightarrow C\rightarrow H^{m-1}_{P}(R)^l\rightarrow W_1\rightarrow 0.
     \end{align}
Note that $W_1$ is $P$-torsion and $(W_1)_P=0$ so $\Supp_R W_1\subseteq \maxsp{R}\ \cap \variety(P)$. Since $ H^{m-1}_{P}(R)$ is countably generated, $W_1$ is also countably generated (See, \ref{countable generation point}). Hence, $\Ass_R W_1=\Supp_R W_1$ is a countable subset of $\maxsp{R}\ \cap \variety(P)$. 

Similarly, the exact sequence \begin{align}\label{equation w2}
    0\rightarrow C\rightarrow N\rightarrow W_2\rightarrow 0\end{align} implies that $\Ass_R W_2=\Supp_R W_2$ is a countable subset of $\maxsp{R}\ \cap \variety(P)$. 

We note that $\maxsp{R}\ \cap \variety(P)$ is an uncountable set. This is because by Noether normalization $K[z]\hookrightarrow R/P$ is an integral extension and since $K[z]$ has uncountably many maximal ideals, $R/P$ also has uncountably many maximal ideals. Therefore, there is some $\n\in \maxsp{R}\ \cap \variety(P)$ such that ${(W_1)}_{\n}= {(W_2)}_{\n}=0$. Hence, from equation \ref{equation w1}, we get $C_\n=H^{m-1}_{P}(R)^l_{\n}$. Remark \ref{remark for 1st bass number} implies that $\mu_1(\n,C)=\mu_1(\n R_{\n},C_{\n})=l$. Also since ${(W_2)}_{\n}=0$, from equation \ref{equation w2} we get $C_\n=N_\n$. By \cite[Proposition 1.3]{Put-14}, $l=\mu_1(\n,C)=\mu_1(\n R_{\n},C_{\n})=\mu_1(\n R_{\n},N_{\n})=\mu_1(\n,N)\leq 2^mm\ e(N)\leq 2^mm\ e(H^0_P(M))\leq 2^mm\ e(M)$. This completes our proof.
\end{proof}
As a consequence of the preceding lemma, we next examine the higher Bass numbers of $\mathcal{T}(R)$.  We reiterate that, as discussed in the introduction, there exists a value of $t$ such that $H_{m-1}(t)$ is an uncountable set. The next result states that there is a bound of Bass numbers of $\mathcal{T}(R)$ for the primes on the uncountable set $H_{m-1}(t)$.
\begin{theorem} \phantomsection\label{corollary for m-1}
   Let $R=K[x_1,\ldots,x_m]$ where $K$ is an uncountable algebraically closed field of characteristic $0$.  Let $\mathcal{T}$ be a Lyubeznik functor on $\Mod(R)$. Then, for all $P\in H_{m-1}(t)$, $$\mu_i(P,\mathcal{T}(R))\leq \phi_i^{m-1}(e(\mathcal{T}(R)),t)$$ 
   where $$\phi_i^{m-1}(u,v)=2^mm\binom{m}{i}u(1+iv)^m.$$
\end{theorem}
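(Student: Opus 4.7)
The plan is to derive the bound by reducing the higher Bass number $\mu_i(P,\mathcal{T}(R))$ to the zeroth Bass number of the local cohomology module $H^i_P(\mathcal{T}(R))$, and then applying Lemma C to that module (which is itself the value at $R$ of the Lyubeznik functor $H^i_P\circ\mathcal{T}$). The key identity I aim to establish is
$$\mu_i\bigl(P,\mathcal{T}(R)\bigr)=\mu_0\bigl(P,H^i_P(\mathcal{T}(R))\bigr),$$
the natural higher analogue of the identity $\mu_0(P,M)=\mu_0(P,H^0_P(M))$ used in the proof of Lemma C. I expect this to follow from Lyubeznik's structural results on minimal injective resolutions (\cite[Lemma 1.4, Theorem 3.4(b)]{Lyu-93}) applied locally at $R_P$, which should give $H^i_{PR_P}(\mathcal{T}(R)_P)\cong E(\kappa(P))^{\mu_i(P,\mathcal{T}(R))}$.

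Granted the identity, I exploit the hypothesis $P\in H_{m-1}(t)$: choose $f_1,\ldots,f_m\in R$ with $\deg f_j\le t$ and $P=\sqrt{(f_1,\ldots,f_m)}$. Since local cohomology depends only on the radical of the defining ideal, $H^i_P(\mathcal{T}(R))=H^i_{(f_1,\ldots,f_m)}(\mathcal{T}(R))$ is computed by the \v{C}ech complex on $f_1,\ldots,f_m$. For $i\ge 1$ its $i$-th term is $\bigoplus_{|S|=i}\mathcal{T}(R)_{f_S}$ (a direct sum of $\binom{m}{i}$ localizations), where $f_S=\prod_{j\in S}f_j$ has degree at most $it$. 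Theorem \ref{Multiplicity bound} gives $e\bigl(\mathcal{T}(R)_{f_S}\bigr)\le (1+it)^m e(\mathcal{T}(R))$, and since $H^i_P(\mathcal{T}(R))$ is a subquotient of the $i$-th \v{C}ech term (hence in $\mathcal{B}_m$ with monotone multiplicity),
$$e\bigl(H^i_P(\mathcal{T}(R))\bigr)\le \binom{m}{i}(1+it)^m e(\mathcal{T}(R))$$
(and for $i=0$ the trivial bound $e(H^0_P(\mathcal{T}(R)))\le e(\mathcal{T}(R))$ matches the formula). Applying Lemma C to the Lyubeznik module $H^i_P(\mathcal{T}(R))$ then yields
$$\mu_0\bigl(P,H^i_P(\mathcal{T}(R))\bigr)\le 2^m m\cdot e\bigl(H^i_P(\mathcal{T}(R))\bigr)\le 2^m m\binom{m}{i}(1+it)^m e(\mathcal{T}(R))=\phi_i^{m-1}\bigl(e(\mathcal{T}(R)),t\bigr),$$
and combining with the key identity completes the proof.

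The main obstacle is the key identity. For a general $R$-module $M$, $\mu_i(P,M)$ and $\mu_0(P,H^i_P(M))$ need not agree: the differentials in the $P$-torsion part $\Gamma_{PR_P}E^\bullet$ of the minimal injective resolution of $M_P$ may non-trivially identify or annihilate copies of $E(\kappa(P))$ when passing to cohomology. The delicate point is to show that for Lyubeznik modules these differentials vanish on the $E(\kappa(P))$-components, so that $H^i_P$ preserves the $P$-primary multiplicity in the precise sense required. This is the higher-Bass-number extension of the Lyubeznik-type injectivity phenomenon exploited in Lemma C, and is where I anticipate the main technical work.
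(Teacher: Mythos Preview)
Your proof is correct and follows essentially the same approach as the paper: reduce via the identity $\mu_i(P,\mathcal{T}(R))=\mu_0(P,H^i_P(\mathcal{T}(R)))$, apply Lemma~C to the Lyubeznik module $H^i_P(\mathcal{T}(R))$, and bound its multiplicity by the \v{C}ech complex and Theorem~\ref{Multiplicity bound}. Your worry about the ``key identity'' is misplaced---as you yourself note, it is precisely the content of \cite[Lemma~1.4 and Theorem~3.4(b)]{Lyu-93} applied at $R_P$, and the paper simply cites these references without any further argument; there is no additional technical work to anticipate.
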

\begin{proof}
    Let $M=\mathcal{T}(R)$. By \cite[Lemma 1.4, Theorem 3.4(b)]{Lyu-93}, $\mu_i(P,M)=\mu_0(P,H^i(M))$. By Lemma \ref{Result of mu_0}, $\mu_0(P,H^i(M))\leq 2^mm\ e(H^i_P(M))$. Note that $H^i_P(M)=H^i_J(M)$ where $J=(f_1,\ldots,f_m)$ with $\deg f_j\leq t$ for all $j$. $H^i_J(M)$ is the cohomology of the following Cech complex
    $$
		\Check{C}^{\bullet}(\underline f,M)=\Check{C}^{\bullet}:\  0\rightarrow M\rightarrow \bigoplus_iM_{f_i}\rightarrow \bigoplus_{i,j}M_{f_if_j}\rightarrow\ldots \rightarrow M_{f_1\ldots f_m}\rightarrow 0.
		$$
        Hence, $e(H^i_P(M))\leq e(\Check{C}^i)\leq \binom{m}{i}e(M)(1+it)^m$. The last inequality follows from Theorem \ref{Multiplicity bound}. Therefore, $\mu_i(P,M)\leq 2^mm\binom{m}{i}e(M)(1+it)^m$. This completes the proof of our theorem.
	\end{proof}
	\section{Proof of Proposition E and Theorem F}
    In this section, our goal is to  prove  \hyperref[bounded by some function]{Theorem F}. Before that, let us prove \hyperref[Proposition about S_g]{Proposition E}. For the reader's convenience, we again recall the definition of the sets $\mathcal{S}_g(t)$ as stated below. 
    \begin{definition}
Define $\mathcal{S}_{m-1}(t):=H_{m-1}(t)$. If $S_{g+1}(t)$ is defined for some $1\leq g\leq m-2$, then we define $\mathcal{S}_g(t)$ in the following way:
 $$\mathcal{S}_g(t):=\{P\in H_g(t)\mid\  C^P_g(t)\  \text{is uncountable}\}.$$ where 
 $$C^P_g(t):=\{Q\in S_{g+1}(t) \mid Q\supsetneq P\}$$
 \end{definition} 
 We note that $H_{m-1}(t)\subseteq H_{m-1}(t+1)$ for all $t\geq 1$ and $\bigcup_{t\geq 1}H_{m-1}(t)=\Spec_{m-1}(R)=\{P\in \Spec(R)\ |\  \height P=m-1\}$. We now show via the following  proposition that for $1\leq g\leq m-2$, the sets $\mathcal{S}_g(t)$ have same properties.
    \begin{proposition}\label{proposition of s_g proof}
        Let $1\leq g\leq m-2$. Then,
  \begin{enumerate}[\rm (1)]
      \item Let $P\in \mathcal{S}_g(t) $. Then, $C^P_g(t)\subseteq C^P_g(t+1)$ for all $t\geq 1$ and $\bigcup_{t\geq 1}C^P_g(t)=\Spec_{g+1}(R)\cap \variety(P)$;
      \item $\mathcal{S}_g(t)\subseteq \mathcal{S}_g(t+1)$ for all $t\geq 1$ and $\bigcup_{t\geq 1} \mathcal{S}_g(t)=\Spec_g(R)$.
      \end{enumerate}
    \end{proposition}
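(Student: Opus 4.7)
The plan is to proceed by descending induction on $g$, starting from $g=m-1$ and working downward. The base case $g=m-1$ is immediate: $\mathcal{S}_{m-1}(t)=H_{m-1}(t)$, and the required monotonicity and exhaustiveness are exactly what was recorded for the $H_g(t)$ after their definition. So I fix $1\leq g\leq m-2$ and assume the proposition holds for $g+1$.

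For part (1), fix $P\in \mathcal{S}_g(t)$. The inclusion $C^P_g(t')\subseteq C^P_g(t'+1)$ is a direct consequence of the inductive hypothesis $\mathcal{S}_{g+1}(t')\subseteq \mathcal{S}_{g+1}(t'+1)$. For the equality $\bigcup_{t'\geq 1}C^P_g(t')=\Spec_{g+1}(R)\cap \variety(P)$, the inclusion $\subseteq$ is by definition, while for the reverse one I take $Q\in \Spec_{g+1}(R)\cap \variety(P)$ (so that $Q\supsetneq P$ strictly, since $\height Q=g+1>g=\height P$) and invoke the inductive hypothesis to place $Q$ in $\mathcal{S}_{g+1}(t')$ for some $t'$, whence $Q\in C^P_g(t')$. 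I would note that this second argument does not use the hypothesis $P\in \mathcal{S}_g(t)$ and therefore produces the same union formula for any height-$g$ prime.

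For part (2), the monotonicity $\mathcal{S}_g(t)\subseteq \mathcal{S}_g(t+1)$ follows by combining $H_g(t)\subseteq H_g(t+1)$ with part (1): $C^P_g(t)\subseteq C^P_g(t+1)$ shows that uncountability is preserved from level $t$ to level $t+1$. The inclusion $\bigcup_t\mathcal{S}_g(t)\subseteq \Spec_g(R)$ is immediate from $\mathcal{S}_g(t)\subseteq H_g(t)\subseteq \Spec_g(R)$. For the reverse inclusion, take $P\in \Spec_g(R)$; by Eisenbud--Evans there is some $t_0$ with $P\in H_g(t_0)$. By the observation above, $\bigcup_{t}C^P_g(t)=\Spec_{g+1}(R)\cap \variety(P)$, so if the right-hand side is uncountable, pigeonhole on this countable ascending union yields some $t_1$ with $C^P_g(t_1)$ uncountable, and then $P\in \mathcal{S}_g(\max(t_0,t_1))$.

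The remaining step, which I expect to be the main mathematical obstacle, is the uncountability of $\Spec_{g+1}(R)\cap \variety(P)$, equivalently of the set of height-one primes of $R/P$. Here I would use the uncountability of $K$ in an essential way: Noether normalization supplies an integral extension $K[y_1,\ldots,y_{m-g}]\hookrightarrow R/P$ with $m-g\geq 2$, and the polynomial subring already has uncountably many height-one primes (for instance the ideals $(y_1-c)$ as $c$ ranges over $K$). Since $K[y_1,\ldots,y_{m-g}]$ is normal and $R/P$ is a domain, going-down lifts each such prime to a height-one prime of $R/P$, and distinct contractions force these lifts to be distinct, giving uncountably many height-one primes of $R/P$. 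This step genuinely needs both $K$ uncountable and the hypothesis $g\leq m-2$ (otherwise $R/P$ would be one-dimensional and the argument would break down); everything else is bookkeeping with the inductive hypothesis.
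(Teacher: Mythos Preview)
Your proof is correct and follows essentially the same descending-induction strategy as the paper: use the monotonicity and exhaustiveness of $\mathcal{S}_{g+1}(t)$ to deduce the same for $C^P_g(t)$, then combine with the properties of $H_g(t)$ and the uncountability of $\Spec_{g+1}(R)\cap \variety(P)$ to handle $\mathcal{S}_g(t)$. The only notable difference is that the paper simply asserts the uncountability of $\Spec_{g+1}(R)\cap \variety(P)$, whereas you supply an explicit Noether-normalization and going-down argument; this is a welcome addition (and in fact lying-over together with the dimension formula for affine domains already suffices, so going-down is not strictly needed).
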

    \begin{proof}
        Let $g=m-2$. Then $C^P_{m-2}(t)=\{Q\in H_{m-1}(t)\mid Q\supsetneq P\}$. Clearly, $C^P_{m-2}(t)\subseteq C^P_{m-2}(t+1)$ for all $t\geq 1$ and $\bigcup_{t\geq 1}C^P_{m-2}(t)=\Spec_{m-1}(R)\cap \variety(P)$. This proves (1) and we now prove (2). Since $\Spec_{m-1}(R)\cap \variety(P)$ is uncountable, there exists some $t_0\geq 1$ such that $C^P_{m-2}(t_0)$ is uncountable and therefore $C^P_{m-2}(t)$  is uncountable for all $t\geq t_0$. We claim that $\bigcup_{t\geq 1}\mathcal{S}_{m-2}(t)=\bigcup_{t\geq 1}H_{m-2}(t)=\Spec_{m-2}(R)$. This is because if $P\in \bigcup_{t\geq 1}H_{m-2}(t)$, then $P\in H_{m-2}(r_0)$ for some $r_0$. Choose a sufficiently large $s_0$ (bigger than $r_0$)  so that  $C^P_{m-2}(s_0)$  is uncountable. Then, $ P\in H_{m-2}(r_0)\subseteq H_{m-2}(s_0)$ and this implies $ P\in \mathcal{S}_{m-2}(s_0)$. Hence, $\bigcup_{t\geq 1}\mathcal{S}_{m-2}(t)=\bigcup_{t\geq 1}H_{m-2}(t)$.

        Now assume that the result holds for some $2\leq g+1\leq m-2$ and we prove it for $g$. We have $$C^P_g(t)=\{Q\in S_{g+1}(t) \mid Q\supsetneq P\}$$ and  $$\mathcal{S}_g(t)=\{P\in H_g(t)\mid\  C^P_g(t)\  \text{is uncountable}\}.$$ Since for all $t\geq 1$, $\mathcal{S}_{g+1}(t)\subseteq \mathcal{S}_{g+1}(t+1)$, we get that $C^P_g(t)\subseteq C^P_g(t+1)$. Also, $\bigcup_{t\geq 1}C^P_g(t)=\bigcup_{t\geq 1}\mathcal{S}_{g+1}(t)\cap \variety(P)=\Spec_{g+1}(R)\cap \variety(P)$. This proves (1) of the proposition, and we proceed to prove (2). Since $\Spec_{g+1}(R)\cap \variety(P)$ is uncountable, there exists some $t_1\geq 1$ such that $C^P_{g}(t_1)$ is uncountable and therefore $C^P_{g}(t)$  is uncountable for all $t\geq t_1$. Now a similar argument as in $m-2$ case proves that $\bigcup_{t\geq 1}\mathcal{S}_{g}(t)=\bigcup_{t\geq 1}H_{g}(t)$. In fact, if $P\in \bigcup_{t\geq 1}H_{g}(t)$, then $P\in H_{g}(r_1)$ for some $r_1$. Choose a sufficiently large $s_1$ (bigger than $r_1$)  so that  $C^P_{g}(s_1)$  is uncountable. Then, $ P\in H_{g}(r_1)\subseteq H_{g}(s_1)$ and this implies $ P\in \mathcal{S}_{g}(s_1)$. Hence, $\bigcup_{t\geq 1}\mathcal{S}_{g}(t)=\bigcup_{t\geq 1}H_{g}(t)=\Spec_g(R)$. This completes the proof of the proposition.
    \end{proof}
    Finally, we prove the main result of this paper.
	\begin{theorem}\label{proof of main theorem}
	   Let $1\leq g\leq m-1$ and let $R=K[x_1,\ldots,x_m]$ where $K$ is an uncountable algebraically closed field of characteristic $0$. Let $M=\mathcal{T}(R)$ for some Lyubeznik functor $\mathcal{T}$. Then, there exists some function $\phi^g_i: \mathbb{N}^2\rightarrow \mathbb{N}$ which is monotonic in both the variables such that  $\mu_i(P,M)\leq \phi^g_i(e(M),t)$ for all $P\in \mathcal{S}_{g}(t)$.
	\end{theorem}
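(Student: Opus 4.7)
The plan is to prove Theorem F by downward induction on $g$, using Theorem \ref{corollary for m-1} for the base case $g = m-1$. For the inductive step with $g < m-1$, I would mimic the argument of Lemma \ref{Result of mu_0} almost verbatim, substituting ``uncountably many primes in $C^P_g(t) \subseteq \mathcal{S}_{g+1}(t)$'' for ``uncountably many maximal ideals above $P$''; this uncountability is exactly the defining property of $P \in \mathcal{S}_g(t)$ (Proposition \ref{proposition of s_g proof}).

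Fix $P \in \mathcal{S}_g(t)$. The first reduction is the Lyubeznik identity $\mu_i(P, M) = \mu_0(P, M')$ for $M' := H^i_P(M)$ (combining \cite[Lemma 1.4, Theorem 3.4(b)]{Lyu-93}). Since $P \in H_g(t)$, write $P = \sqrt{(f_1, \ldots, f_m)}$ with $\deg f_j \leq t$; the \v{C}ech complex on $(f_1, \ldots, f_m)$ together with Theorem \ref{Multiplicity bound} give
$$e(M') \leq \binom{m}{i} e(M) (1 + it)^m.$$
Because $M'$ is again a Lyubeznik functor image of $R$, $H^0_P(M')$ is holonomic, so $\Ass H^0_P(M')$ is finite and $e(H^0_P(M')) \leq e(M')$.

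Following Lemma \ref{Result of mu_0}, set $l := \mu_0(P, M') = \mu_0(P, H^0_P(M'))$, write $\Ass H^0_P(M') = \{P, Q_1, \ldots, Q_s\}$ (each $Q_j \supsetneq P$), and form
$$N := H^0_P(M') / \Gamma_{Q_1 \cdots Q_s}(H^0_P(M'));$$
then $\Ass N = \{P\}$, $\mu_0(P, N) = l$, and $N$ is the Lyubeznik functor image $H^0_{\variety(P) \setminus (\variety(Q_1) \cup \cdots \cup \variety(Q_s))}(M')$, hence admissible for the inductive hypothesis. Since $R_P$ is regular local of dimension $g$, $N_P \cong E_{R_P}(\kappa(P))^l \cong H^g_P(R)^l_P$; constructing $C$ (by pullback or by intersection inside $N_P$) produces
$$0 \to C \to H^g_P(R)^l \to W_1 \to 0 \quad \text{and} \quad 0 \to C \to N \to W_2 \to 0$$
with $(W_i)_P = 0$ and $\Supp W_i \subseteq \variety(P) \setminus \{P\}$. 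Each $W_i$ is countably generated (point \ref{countable generation point}), so $\Ass W_i$ is countable.

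The key point is the choice of $Q$. Every $\mathfrak{p} \in \Ass W_i$ satisfies $\height \mathfrak{p} \geq g+1$, and for a prime $Q$ of height exactly $g+1$ strictly above $P$, the relation $\mathfrak{p} \subseteq Q$ forces $\mathfrak{p} = Q$; hence $(W_i)_Q \neq 0$ only for $Q \in (\Ass W_1 \cup \Ass W_2) \cap \Spec_{g+1}(R)$, a countable set. Since $C^P_g(t) \subseteq \mathcal{S}_{g+1}(t)$ is uncountable, I can choose $Q \in C^P_g(t)$ avoiding this set; then $C_Q \cong N_Q \cong H^g_P(R)^l_Q$. Remark \ref{remark for 1st bass number} gives $\mu_1(Q, C) = l$, while the second short exact sequence gives $\mu_1(Q, C) = \mu_1(Q, N)$. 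The inductive hypothesis applied to $N$ at $Q \in \mathcal{S}_{g+1}(t)$ yields
$$l = \mu_1(Q, N) \leq \phi_1^{g+1}(e(N), t) \leq \phi_1^{g+1}\!\Bigl(\tbinom{m}{i} e(M) (1+it)^m,\, t\Bigr),$$
so setting $\phi_i^g(u, v) := \phi_1^{g+1}\bigl(\binom{m}{i} u (1+iv)^m,\, v\bigr)$ furnishes a function monotonic in both variables satisfying the required bound. The main obstacle is the rigorous construction of $C$ together with the verification of the two short exact sequences (ensuring both ``projections'' are injective with $P$-torsion cokernels), since unlike in Lemma \ref{Result of mu_0} the localization map $H^g_P(R) \to H^g_P(R)_P$ need not be injective for general $g$; everything else is a routine height-chase confirming that the countable set of bad primes can be avoided inside the uncountable set $C^P_g(t)$.
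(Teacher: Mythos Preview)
Your proposal is correct and follows essentially the same approach as the paper's proof: downward induction on $g$ with base case $g=m-1$ given by Theorem~\ref{corollary for m-1}, the same $N$, $C$, $W_1$, $W_2$ construction from Lemma~\ref{Result of mu_0}, the same countability avoidance to select $Q\in C^P_g(t)\subseteq\mathcal{S}_{g+1}(t)$, Remark~\ref{remark for 1st bass number} to identify $l=\mu_1(Q,C)$, and the same reduction of $\mu_i$ to $\mu_0$ via $H^i_P(M)$ together with the \v{C}ech multiplicity bound---even your formula $\phi^g_i(u,v)=\phi^{g+1}_1\bigl(\binom{m}{i}u(1+iv)^m,v\bigr)$ coincides with the paper's. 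The obstacle you flag concerning the construction of $C$ is treated identically in the paper, which simply writes $C=N\cap H^g_P(R)^l$ and the two short exact sequences without further comment, so this is not a point of divergence between your argument and theirs.
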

	\begin{proof}
    If $g=m-1$, the result follows from Theorem \ref{corollary for m-1}. Next, we proceed in two steps to prove the theorem. \medskip
    
    \noindent
\textbf{Step 1:} Let $g=m-2$.\medskip
     
	    Let $i=0$. By \cite[Lemma 1.4, Theorem 3.4(b)]{Lyu-93}, $\mu_0(P,M)=\mu_0(P,H^0_P(M))$. We note that $e(H^0_P(M))\leq e(M)$ and $\Ass_R(H^0_P(M))$ is a finite set.  If $P\notin \Ass H^0_P(M)$, then there is nothing to prove since $\mu_0(P,M)=0$. Hence, assume that $$\Ass_R(H^0_P(M))=\{P,Q_1\ldots,Q_r\}$$ where $\height Q_i\geq m-1$. Consider the exact sequence
        $$0\rightarrow \Gamma_{Q_1\cdots Q_r}(H^0_P(M))\rightarrow H^0_P(M)\rightarrow N\rightarrow 0.$$
Clearly, $e(N)\leq e(H^0_P(M))$ and $N=\mathcal{F}(R)$ for some Lyubeznik functor $\mathcal{F}$. By \cite[Theorem 3.4(b)]{Lyu-93}, there exists some finite $l>0$ such that $N_P=H^{m-2}_{PR_P}(R_P)^l=E(\kappa(P))^l$ as $R_P$-module and hence as $R$-module. Note that $l=\mu_0(P,N)=\mu_0(P,M)$. Let $C=N\cap H^{m-2}_{P}(R)^l$. Therefore, $C_P=N_P\cap H^{m-2}_{PR_P}(R_P)^l= N_P=H^{m-2}_{PR_P}(R_P)^l$. Consider the exact sequence \begin{align}\label{equation w1'}
     0\rightarrow C\rightarrow H^{m-2}_{P}(R)^l\rightarrow W_1\rightarrow 0.
     \end{align}
Since $ H^{m-2}_{P}(R)$ is countably generated, $W_1$ is also countably generated (See, \ref{countable generation point}). We note that $\Supp_R W_1\subseteq \{Q: \height Q\geq m-1, P\subseteq Q\}$. Let $L(W_1):=\{Q: \height Q= m-1, Q\supseteq P,\  Q\in \Supp_R W_1\}\subseteq \Ass_R W_1$ is a countable set.

Similarly, the exact sequence 
\begin{align}\label{equation w2'}
     0\rightarrow C\rightarrow N\rightarrow W_2\rightarrow 0
     \end{align}
	implies $L(W_2):=\{Q: \height Q= m-1, Q\supseteq P,\  Q\in \Supp_R W_2\}\subseteq \Ass_R W_2$ is a countable set. Since $C^P_{m-2}(t)$ is uncountable, there is some $Q\in C^P_{m-2}(t)$ such that $(W_1)_Q=(W_2)_Q=0$. Hence, equations \ref{equation w1'} and \ref{equation w2'} imply $C_Q=H^{m-2}_{P}(R)^l_Q=N_Q$. By Remark \ref{remark for 1st bass number}, $l=\mu_1(Q,C)=\mu_1(Q,N)\leq 2^mm^2e(N)(1+t)^m \leq 2^mm^2e(M)(1+t)^m $. The second last inequality follows from Theorem \ref{corollary for m-1}. Let $\phi^{m-2}_0(u,v):=2^mm^2u(1+v)^m$.

    Now suppose $i\geq 1$. Then, $\mu_i(P,M)=\mu_0(P,H^i_P(M))$. Therefore, from the $i=0$ case we get  $\mu_i(P,M)\leq \phi^{m-2}_0(e(H^i_P(M),t) $. Again, from the proof of \ref{corollary for m-1}, we have $e(H^i_P(M))\leq h(e(M),t)$ where $h(u,v)=\binom{m}{i}u(1+iv)^m$. Hence,  $\mu_i(P,M)\leq \phi^{m-2}_i(e(M),t)$ where $\phi^{m-2}_i(u,v)=\phi^{m-2}_0(h(u,v),v)$. This completes the proof of $m-2$ case.\medskip

    \noindent
\textbf{Step 2:}  Now assume that the result holds for some $2\leq g+1\leq m-2$ and we prove it for $g$.\medskip

Let $i=0$. An argument similar to that in the case $m-2$ gives us that there is some $Q\in C^P_g(t)$ such that $\mu_0(P,M)=\mu_1(Q,N)\leq \phi^{g+1}_1(e(N),t)\leq \phi^{g+1}_1(e(M),t) $ where $\phi^{g+1}_1$ is some monotonic function in both variables. Choose $\phi^{g}_0$ to be $\phi^{g+1}_1$.

Now suppose $i\geq 1$. Then, $\mu_i(P,M)=\mu_0(P,H^i_P(M))$. Therefore, from the $i=0$ case we get  $\mu_i(P,M)\leq \phi^g_0(e(H^i_P(M),t) $. Again, from the proof of \ref{corollary for m-1}, we have $e(H^i_P(M))\leq h(e(M),t)$ where $h(u,v)=\binom{m}{i}u(1+iv)^m$. Hence,  $\mu_i(P,M)\leq \phi^g_i(e(M),t)$ where $\phi^g_i(u,v)=\phi^g_0(h(u,v),v)$.

This completes the proof of the theorem.
	\end{proof}

	\section*{Acknowledgement}
	The first named author thanks the Government of India for support through the Prime Minister's Research Fellowship (PMRF ID: 1303161).
	%\bibliographystyle{amsplain}
%\bibliography{Sayed}

\providecommand{\bysame}{\leavevmode\hbox to3em{\hrulefill}\thinspace}
\providecommand{\MR}{\relax\ifhmode\unskip\space\fi MR }
 %\MRhref is called by the amsart/book/proc definition of \MR.
\providecommand{\MRhref}[2]{
  \href{http://www.ams.org/mathscinet-getitem?mr=#1}{#2}
}

\end{document}